\documentclass[11pt]{amsart}
\usepackage{paper}
\usepackage{float}
\parindent 0in
\usepackage{longtable}
\renewcommand{\C}{\mathcal{C}}

\definecolor{mygreen}{rgb}{0.35, 0.71, 0.0}


\newcommand{\ordpi}{\ord_{p,i}}


\title{Connectedness of special points in the Markoff $\mod p$ graphs}

\author{Elisa Bellah}
\address{(Bellah) Department of Mathematics, University of Toronto}
\email{elisa.bellah@utoronto.ca}

\author{Claire Dunn}
\address{(Dunn) Department of Mathematics, Oregon State University}
\email{dunncla@oregonstate.edu}

\author{Vernon Naidu}
\address{(Naidu) Department of Mathematics, San Francisco State University}
\email{vnaiduteaching@gmail.com}

\author{Alette Wells}
\address{(Wells) Department of Mathematics, The University of Chicago}
\email{alettew@uchicago.edu}

\tolerance=2000
\emergencystretch=1em
	\sloppy
\hfuzz=3pt
\hbadness=10000
\begin{document}

\begin{abstract} It is conjectured that the Markoff equation $X^2+Y^2+Z^2=3XYZ$ satisfies the special Diophantine property that every $\mod p$ solution lifts to an integer solution. Progress toward this conjecture has been made by studying the connectedness of the graphs obtained from the action of the Vieta group on the nonzero $\mod p$ solutions to the Markoff equation. In this paper, we use results on Pisano periods of the Fibonacci sequence to obtain explicit results on the connectedness of special points in this graph for primes $p$ where $p+1$ has large $2$-adic valuation. In particular, for Mersenne primes $p \equiv \pm 2 \,(\mod 5)$, we show that the special point $(1, 1, 1)$ which is fixed under reduction modulo $p$ lies in a component of this graph which is known to be connected.
\end{abstract}

\maketitle

\section{Introduction}

The Markoff surface $\X$ is the affine surface in $\A^3$ given by
\[\X: X_1^2+X_2^2+X_3^2=3X_1X_2X_3,\]
and the positive integer points $\X(\Z_{>0})$ are called Markoff triples. Originating in the theory of Diophantine approximation and quadratic forms (see \cite{markoff}), this equation has more recently garnered interest as it appears to satisfy the special Diophantine property that every $\mod p$ solution has an integer lift, as conjectured by Baragar in \cite{baragar}. Major progress toward this conjecture (often referred to as ``Strong Approximation") was made in the paper \cite{BGS} of Bourgain, Gamburd, and Sarnak by studying the connectedness of the graphs arising from the action of the Vieta group (discussed in Section \ref{sec:2}) on the $\mod p$ solutions.\\

In this paper, we build on the results of \cite{BGS} and \cite{bellah} to construct an explicit family of primes where Strong Approximation holds for an expected ``large proportion" of $\mod p$ solutions to the Markoff equation. Below we state a consequence of our main result, which builds on the major progress from \cite{BGS}.

\begin{theorem}[Consequence of Theorem \ref{main}] \label{mainprelim}
Let $p>5$ be a Mersenne prime with $p \equiv \pm 2 (\mod 5)$.
If $\x=(x_1, x_2, x_3)$ is a $\mod p$ solution to the Markoff equation where the order of \[\begin{pmatrix} 0 & 1 \\ -1 & 3x_i \end{pmatrix}\] in $\GL_2(\F_p)$
is at least $p+1$ for some $i \in \{1, 2, 3\}$, then $\x$ has an integer lift.
\end{theorem}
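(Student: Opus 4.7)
The statement is explicitly framed as a consequence of Theorem~\ref{main}, so the plan is to translate the hypothesis here into that of the stronger theorem and then invoke it.

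The first step is to set up the standard dictionary between the matrices $M_i(\x) := \begin{pmatrix} 0 & 1 \\ -1 & 3x_i \end{pmatrix}$ and rotations in the Vieta group action on the Markoff mod $p$ graph. Fixing the $i$-th coordinate of $\x$ and iterating the Vieta involution (which sends $(x_i,x_j,x_k)$ to $(x_i,x_j,3x_ix_j - x_k)$) composed with the swap of the two non-fixed coordinates produces a rotation $R_i$ whose iterates on $\x$ generate a sequence $y_n$ satisfying the linear recurrence $y_{n+1} = 3x_i y_n - y_{n-1}$. The companion matrix of this recurrence is precisely $M_i(\x)$, so the length of the $\langle R_i\rangle$-orbit of $\x$ equals the order of $M_i(\x)$ in $\GL_2(\F_p)$.

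With this translation, the hypothesis of Theorem~\ref{mainprelim} says that for some $i$ the rotation orbit of $\x$ has length at least $p+1$. This is precisely the form of input that Theorem~\ref{main} is designed to handle: for Mersenne primes $p \equiv \pm 2 \pmod 5$, one expects to show that any point with a sufficiently long rotation orbit must lie in the Bourgain--Gamburd--Sarnak connected component $\C(p) \subseteq \xp$ constructed in \cite{BGS} and refined in \cite{bellah}. Since every element of $\C(p)$ admits an integer lift, the conclusion follows.

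The genuine difficulty is absorbed into Theorem~\ref{main} rather than into the reduction above. The Mersenne and quadratic-residue hypotheses feed into a Pisano-period calculation: for $p = 2^k - 1$ with $p \equiv \pm 2 \pmod 5$ the Pisano period of the Fibonacci sequence mod $p$ divides $p+1 = 2^k$, giving tight control on the orders of the matrices $M_i$ (and, at the distinguished point $(1,1,1)$, recovering the classical fact that iterating $R_1$ traces out odd-indexed Fibonacci numbers mod $p$). Converting this arithmetic control into an actual edge-path in the Markoff graph from $\x$ into $\C(p)$ is the step I expect to be the main obstacle, and is where the substantive work of the paper should sit.
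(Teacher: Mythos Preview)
Your proposal misidentifies where Theorem~\ref{main} enters. The hypothesis ``the order of $A_{\x,i}$ is at least $p+1$'' already forces $\x$ to be a maximal point: by Proposition~\ref{orders_upperbound} the order divides $p-1$, divides $p+1$, or equals $p$ or $2p$, and the only values compatible with the bound $\geq p+1$ are $p+1$ (maximal elliptic) and $2p$ (maximal parabolic). Hence $\x\in\C(p)$ immediately from the definitions; no appeal to Theorem~\ref{main}, Pisano periods, or Mersenne-ness is required for this step. Your sentence ``converting this arithmetic control into an actual edge-path in the Markoff graph from $\x$ into $\C(p)$'' therefore describes a nonexistent obstacle.

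What is \emph{not} automatic is your claim that ``every element of $\C(p)$ admits an integer lift.'' The cage is connected (Proposition~\ref{cageconnected}, from \cite{BGS}), but connectedness alone does not produce lifts: one needs some point of $\C(p)$ that visibly lifts, and the natural candidate $(1,1,1)$ is not known a priori to lie in $\C(p)$. This is exactly the contribution of Theorem~\ref{main}: it gives the divisibility $2^{\nu}\mid\ord_{p,i}(1,1,1)$ (note the direction---you wrote that the Pisano period \emph{divides} $p+1$, but the paper proves $2^{\nu+1}$ divides $\pi(p)$), and for a Mersenne prime $p=2^n-1$ with $p\equiv\pm 2\pmod 5$ this forces $\ord_{p,i}(1,1,1)=p+1$, i.e.\ $(1,1,1)\in\C(p)$ (Corollary~\ref{cor2}). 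Once that is established, Remark~\ref{why111} gives the lift of $\x$. In short, you have inverted the roles of $\x$ and $(1,1,1)$: Theorem~\ref{main} is applied to the latter, not the former.
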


Our contribution to Theorem \ref{mainprelim} guarantees the connectedness of the special point $(1, 1, 1)$ which is fixed under reduction modulo $p$ to a large component of the Markoff $\mod p$ graph called \textit{the cage} (defined in Section \ref{sec:2}). Our main result (Theorem \ref{main}) will demonstrate a family of primes $p$ and special points in the Markoff $\mod p$ graphs which are guaranteed to be connected to the cage, yielding Theorem \ref{mainprelim} as a corollary. As discussed in Section 3.5 of \cite{bellah}, our result can also be used to relax the conditions of Theorem 1.3 from \cite{bellah}, which upper bounds the size of minimal lifts of $\mod p$ solutions to the Markoff equation. \\

This paper is organized as follows. In Section \ref{sec:2} we give the background needed to state our main result (Theorem \ref{main}). In particular, we define the Markoff $\mod p$ graphs and discuss how Theorem \ref{mainprelim} is related to the connectedness of the special point $(1, 1, 1)$ to the cage (see Remark \ref{why111}). In Section \ref{sec:3} we show that the rotation order of the special point (1, 1, 1), which is key for the analysis of the Markoff $\mod p$ graphs outlined in \cite{BGS}, is equal to half of the Pisano period of the Fibonacci sequence. In Section \ref{sec:4}, we generalize a result on Pisano periods given in \cite{vince} to prove our main result. In Section \ref{sec:5} we discuss the density of primes where the special point $(1, 1, 1)$ is the cage.

\subsection*{Acknowledgements} 
This project was funded by Carnegie Mellon University's SUAMI program. We thank the SUAMI organizers for supporting this project. We also thank Professor Elena Fuchs for initiating work that inspired this project. 

\section{Preliminaries} \label{sec:2}
In this section, we give the background needed to state our main result (Theorem \ref{main}). This section largely follows Section 1 of \cite{bellah}. We omit the proofs of results in this section, and instead refer the reader to \cite{bellah} and \cite{BGS}. \\

For convenience, we denote the nonzero $\mod p$ solutions to the Markoff equation by $\X^*(p)$. The Vieta group $\Gamma$ is the group of affine integral morphisms on $\A^3$ generated by permutations $\sigma_{ij}$ and Vieta involutions $R_i$ (see \cite{BGS} or \cite{bellah} for details). We will consider the followings special elements of the Vieta group introduced in \cite{BGS}. 

\begin{definition} The \textit{rotations} are the elements $\rot_i$ of $\Gamma$ given by
\[\rot_1(x_1, x_2, x_3) = (x_1, x_3, 3x_1x_3-x_2)\]
\[\rot_2(x_1, x_2, x_3) = (x_3, x_2, 3x_2x_3 - x_1)\]
\[\rot_3(x_1, x_2, x_3) = (x_2, 3x_2x_3-x_1, x_3).\]

For a prime $p$, the \textit{Markoff $\mod p$ graph} $\G_p$ is defined to be the graph with vertex set $\X^*(p)$ and edges $(\x, \rot_i \x)$ for $i \in \{1, 2, 3\}$. 
	\end{definition}

In \cite{BGS}, the authors construct paths in $\G_p$ by analyzing the orbits under each of the three rotations. We give some notation and terminology to these orbits and their lengths.

\begin{definition} For $\x = (x_1, x_2, x_3) \in \X^*(p)$, we define the following.
	\begin{enumerate}
		\item The \textit{$i$th rotation order} of $\x$ is given by
		\[\ord_{p, i}(\x) := \min \{n \in \Z_{>0} \mid \rot_i^n (\x) \equiv \x (\mod p)\}.\]
		\item The \textit{rotation order of $\x$} is given by
		\[ \ord_p(\x) : = \max\{ \ord_{p, i}(x) \mid i=1, 2, 3\}.\]
	\end{enumerate}
\end{definition}

Observe that for distinct $i, j, k$, the rotation $\rot_i$ acts on $(x_j, x_k)$, and we \\can write
$$\rot_i(x_i) \begin{pmatrix} x_j \\ x_k \end{pmatrix} = \begin{pmatrix} 0  & 1 \\ -1 & 3x_i \end{pmatrix} \begin{pmatrix} x_k \\ x_j \end{pmatrix}.$$
So, it can be shown that the $i$th rotation order of $\x$ is
equal to the order of
\[A_{\x, i}:=\begin{pmatrix} 0 & 1 \\ -1 & 3x_i \end{pmatrix}\]
in $\GL_2(\F_p)$. 
Note in particular that $\rot_{p, i}(\x)$ only depends on the $i$th coordinate of $\x$. \\

	We set the following notation. For $\x=(x_1, x_2, x_3) \in \X^*(p)$, we denote
 the characteristic polynomial of $A_{\x, i}$ by $f_{\x, i}$ and the discriminant of $f_{\x, i}$ by $\Delta_{\x, i}$. When we are only concerned with a single coordinate $x$ of a Markoff $\mod p$ point, we will instead use the notation  
$A_{x}, f_x$ and $\Delta_{x}$, respectively. \\

Note that the action of $\rot_i$ on a Markoff triple $\x$ leaves the $i$th coordinate of $\x$ fixed, and so the orbits $\{\rot_i^n (\x) \mid n \in \Z\}$ correspond to points inside of some conic section with discriminant $\Delta_{\x, i}$. 
Accordingly, we have the following definition, as given in \cite{BGS}. 

\begin{definition}\label{sectionsdef} For $x \in \F_p$ we say that $x$ is
	\begin{enumerate}
	\item \textit{parabolic} if $\Delta_{x} \equiv 0 (\mod p)$,  
	\item \textit{hyperbolic} if $\Delta_x$ is a nonzero square modulo $p$, and
	\item \textit{elliptic} if $\Delta_x$ is not a square modulo $p$.
	\end{enumerate} 
\end{definition}

We have the following observations, whose proofs can be found in Lemma 2.9 and Proposition 2.10 of \cite{bellah}.

\begin{lemma} \label{ordepsilon} Let $\x=(x_1, x_2, x_3)$ be a Markoff $\mod p$ point, and set $x=x_i$. Let $\epsilon_x$ a root of $f_x$. If $x$ is not parabolic, then $\ord_{p, i}(\x)$ is equal to the order of $\epsilon_x$ in $\F^\times$, where 
	\[\F= \begin{cases} \F_p & \text{ when $x$ is hyperbolic} \\
		\F_{p^2}& \text{ when $x$ is elliptic} \end{cases}\] 
under the identification $\F_{p^2} \cong \F_p[T]/(T^2-\Delta_x).$
\end{lemma}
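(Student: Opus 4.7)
The plan is to use the identification, already noted in the excerpt, between $\ord_{p,i}(\x)$ and the order of the matrix $A_{\x,i}$ in $\GL_2(\F_p)$. A direct computation gives $f_x(T) = T^2 - 3xT + 1$ with discriminant $\Delta_x = 9x^2 - 4$; in particular $\det A_{\x,i} = 1$, so the two roots of $f_x$ in any algebraic closure of $\F_p$ are multiplicative inverses of one another and therefore share a common multiplicative order. From here I would split into the two non-parabolic cases and, in each, diagonalize $A_{\x,i}$ over the appropriate field.

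If $x$ is hyperbolic, then $\Delta_x$ is a nonzero square in $\F_p$, so $f_x$ has two distinct roots $\epsilon_x, \epsilon_x^{-1} \in \F_p^\times$, and $A_{\x,i}$ is diagonalizable over $\F_p$. Since the order of a diagonalizable matrix equals the least common multiple of the orders of its eigenvalues, and our two eigenvalues are inverses and hence of equal order, the order of $A_{\x,i}$ in $\GL_2(\F_p)$ coincides with the order of $\epsilon_x$ in $\F_p^\times$, which is what we want.

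If instead $x$ is elliptic, then $f_x$ is irreducible over $\F_p$ but splits over $\F_{p^2} \cong \F_p[T]/(T^2-\Delta_x)$. Its roots are a Frobenius-conjugate pair $\epsilon_x, \epsilon_x^p \in \F_{p^2}^\times$, which are again mutually inverse and so share a common order. The matrix $A_{\x,i}$ is diagonalizable over $\F_{p^2}$, so by the same argument its order in $\GL_2(\F_{p^2})$ equals this common order; since the inclusion $\GL_2(\F_p) \hookrightarrow \GL_2(\F_{p^2})$ preserves the order of any element, the same integer gives the order of $A_{\x,i}$ in $\GL_2(\F_p)$.

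I do not anticipate a serious obstacle here. The only standard fact I would invoke is that a diagonalizable matrix has order equal to the lcm of its eigenvalues' orders, which is immediate from the diagonal form. The real content of the lemma is the observation that $\det A_{\x,i}=1$ forces the two eigenvalues to be inverses of each other, so that one can speak unambiguously of ``the order of $\epsilon_x$'' without specifying which root of $f_x$ is chosen.
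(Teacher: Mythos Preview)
Your argument is correct. The paper does not actually supply a proof of this lemma; it states in Section~\ref{sec:2} that proofs are omitted and refers the reader to Lemma~2.9 of \cite{bellah}, so there is no in-paper proof to compare against. Your route---identifying $\ord_{p,i}(\x)$ with the order of $A_{\x,i}$ in $\GL_2(\F_p)$, then diagonalizing over the splitting field of $f_x$ and using $\det A_{\x,i}=1$ to see that the two eigenvalues are mutually inverse and hence of equal order---is the standard one and is almost certainly what the cited reference does as well.
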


\begin{proposition} \label{orders_upperbound}
Let $\x=(x_1, x_2, x_3) \in \X^*(p)$. For a prime $p >3$. We have 
	\[\ord_{p, i}(\x) \text{ divides }
	\begin{cases} p-1 & \text{ if $x_i$ is hyperbolic} 
		\\ p+1 & \text{ if $x_i$ is elliptic}.
	\end{cases}
	\]
If $x_i$ is parabolic, then $x_i=\pm 2/3$ and we have
\[\ord_{p, i}(\x) = \begin{cases} 2p & \text{ if } x_i = -2/3 \\ p & \text{ if } x_i=2/3. \end{cases}\]

\end{proposition}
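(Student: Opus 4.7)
The strategy is to split into the three cases (hyperbolic, elliptic, parabolic) and in each case analyze the characteristic polynomial $f_{x_i}(T) = T^2 - 3x_i T + 1$ of the matrix $A_{x_i}$, noting that the constant term is $1$ so the product of the eigenvalues of $A_{x_i}$ is always $1$.

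In the hyperbolic case, $f_{x_i}$ splits over $\F_p$ with two distinct roots $\epsilon, \epsilon^{-1} \in \F_p^\times$. By Lemma \ref{ordepsilon}, $\ord_{p,i}(\x)$ equals the order of $\epsilon$ in $\F_p^\times$, which divides $p - 1$ by Lagrange. In the elliptic case, $f_{x_i}$ is irreducible over $\F_p$, so its two roots in $\F_{p^2}$ are Galois conjugates $\epsilon$ and $\epsilon^p$; since their product is $1$, we get $\epsilon^{p+1} = 1$. By Lemma \ref{ordepsilon}, $\ord_{p,i}(\x)$ equals the order of $\epsilon$, which thus divides $p+1$.

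For the parabolic case, $\Delta_{x_i} = 9x_i^2 - 4 \equiv 0 \pmod p$ forces $x_i \equiv \pm 2/3 \pmod p$ (using $p > 3$ so that $2,3$ are invertible). Here Lemma \ref{ordepsilon} does not apply, so I compute the matrix order directly. When $x_i = 2/3$, the matrix $A_{x_i} = \bigl(\begin{smallmatrix} 0 & 1 \\ -1 & 2 \end{smallmatrix}\bigr)$ has characteristic polynomial $(T-1)^2$ and is non-diagonalizable (since $A_{x_i} \neq I$), hence conjugate over $\F_p$ to the Jordan block $\bigl(\begin{smallmatrix} 1 & 1 \\ 0 & 1 \end{smallmatrix}\bigr)$, whose $n$th power is $\bigl(\begin{smallmatrix} 1 & n \\ 0 & 1 \end{smallmatrix}\bigr)$; this is the identity exactly when $p \mid n$, giving order $p$. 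When $x_i = -2/3$, a similar computation shows $A_{x_i}$ is conjugate to $\bigl(\begin{smallmatrix} -1 & 1 \\ 0 & -1 \end{smallmatrix}\bigr)$, whose $n$th power equals $I$ iff $n$ is even and $p \mid n$, i.e.\ $n$ is a multiple of $2p$ (using that $p$ is odd), giving order $2p$.

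The bulk of the argument is essentially bookkeeping once Lemma \ref{ordepsilon} is invoked; the only mildly non-routine step is the parabolic case, where one must verify the non-diagonalizability and carry out the Jordan-block computation. I do not anticipate any real obstacle, since the constant term $1$ in $f_{x_i}$ makes the relation $\epsilon \cdot \epsilon^p = 1$ (and hence $\epsilon^{p+1} = 1$) immediate in the elliptic case, which is the heart of the divisibility claim.
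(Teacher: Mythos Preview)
The paper does not actually contain a proof of this proposition: it explicitly omits proofs of the results in Section~\ref{sec:2} and refers the reader to Lemma~2.9 and Proposition~2.10 of \cite{bellah}. So there is nothing to compare your argument against directly.

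That said, your proof is correct and is the standard argument one would expect. The hyperbolic and elliptic cases follow cleanly from Lemma~\ref{ordepsilon} together with the observation that the constant term of $f_{x_i}$ is $1$, so the roots are inverses of one another; in the elliptic case this gives $\epsilon^{p+1}=\epsilon\cdot\epsilon^p=1$ as you note. In the parabolic case you correctly observe that Lemma~\ref{ordepsilon} no longer applies and instead fall back on the fact (stated in the paper just before this proposition) that $\ord_{p,i}(\x)$ equals the order of $A_{x_i}$ in $\GL_2(\F_p)$; the Jordan-block computation is routine and your treatment of the $x_i=-2/3$ case (writing $A_{x_i}$ as $-I$ times a unipotent and using that $p$ is odd) is fine.
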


Next, we define the following, as in \cite{BGS}.

\begin{definition} \label{maxord} Let $\x=(x_1, x_2, x_3) \in \X^*(p)$. 
	\begin{enumerate}
		\item If $\ord_{p, i}(\x)=p-1$ then we say $x_i$ is \textit{maximal hyperbolic}. 
		\item If $\ord_{p, i}(\x)=p+1$ then we'll call $x_i$ is \textit{maximal elliptic}, and 
		\item If $\ord_{p, i}(\x)=2p$ we say $x_i$ is \textit{maximal parabolic}. 
	\end{enumerate} 
A triple $\x \in \X^*(p)$ will be called \textit{maximal} (hyperbolic, elliptic, or parabolic) if one of its coordinates is either maximal hyperbolic, elliptic, or parabolic.
\end{definition}

\begin{definition} \label{cage} The \textit{cage} is the subgraph $\C(p)$ of the Markoff $\mod p$ graph $\G_p$ containing all vertices that are maximal points in $\X^*(p)$. 
\end{definition}

In \cite{BGS}, the authors prove the following key result. 

\begin{proposition} \label{cageconnected} The cage $\C(p)$ is connected for any prime $p$. \end{proposition}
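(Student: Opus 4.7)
The plan is to show connectedness of the cage by combining two ingredients: each maximal rotation produces an essentially complete cycle on its conic section, and most vertices on such a cycle have at least one other coordinate that is also maximal, letting us switch to a new rotation and thus to a new cycle. This reduces connectedness of $\C(p)$ to connectedness of a coarser ``orbit graph'' whose vertices are the maximal rotation orbits.

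First I would bound the number of non-maximal $x \in \F_p$. By Lemma \ref{ordepsilon}, a hyperbolic $x$ fails to be maximal precisely when $\epsilon_x \in \F_p^\times$ has order dividing a proper divisor of $p-1$, so the number of such $\epsilon_x$ is $\sum_{d \mid (p-1),\, d < p-1} \varphi(d) = (p-1) - \varphi(p-1)$, which is generically far smaller than $p$. The analogous count in $\F_{p^2}^\times$ bounds the non-maximal elliptic coordinates by $(p+1) - \varphi(p+1)$. By Proposition \ref{orders_upperbound} there are only the two parabolic values $\pm 2/3$, both of which contribute to the cage. Hence the total number of non-maximal $x \in \F_p$ is $o(p)$ for most primes.

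Next I would take any $\x \in \C(p)$ with, say, $x_1$ maximal. The $\rot_1$-orbit of $\x$ is a cycle in $\C(p)$ of length at least $p-1$, and every vertex on this cycle shares the coordinate $x_1$. By the first step, only $o(p)$ of these vertices can have a non-maximal second coordinate, so most $\y$ on the cycle have $y_2$ maximal and therefore lie on a full $\rot_2$-orbit also contained in $\C(p)$. This produces many edges in the cage between the $\rot_1$-cycle through $\x$ and other maximal $\rot_2$-cycles, and iterating this observation gives a rich collection of bridges between cycles of different types.

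The main obstacle will be the final global connectedness step: ruling out a small family of orbits whose bridges happen to only go to each other. I would finish by bounding the total number of maximal orbits from above (roughly $|\X^*(p)|/p$) and the number of bridges emanating from each orbit from below (at least $p - o(p)$ per orbit, split among the three rotation directions), and arguing by pigeonhole that the orbit graph is connected. A delicate subcase to handle separately is when $p \pm 1$ has unusually many small divisors, inflating the non-maximal count; in that regime, as in \cite{BGS}, one leans on the parabolic orbit through $\pm 2/3$ as a hub that meets both the hyperbolic and elliptic components, ensuring no isolated clump remains.
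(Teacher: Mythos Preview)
First, note that the paper does not itself prove this proposition; it is quoted from \cite{BGS}, with all proofs in Section~\ref{sec:2} explicitly omitted. So there is no in-paper argument to compare against.

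Your proposal has a genuine gap at the counting step. You assert that the number of non-maximal coordinate values in $\F_p$ is $o(p)$, but this is false for every odd prime: since $2\mid p-1$ we have $\varphi(p-1)\le (p-1)/2$, so at least half of the elements of $\F_p^\times$ are non-generators, and hence a fixed positive proportion of hyperbolic $x$ are already non-maximal (and for smooth $p\pm 1$ the maximal fraction drops to order $1/\log\log p$). Consequently the claim that ``only $o(p)$ of these vertices can have a non-maximal second coordinate'' cannot follow from density, and you give no equidistribution argument for the second coordinates along a maximal $\rot_1$-cycle. The closing pigeonhole step is likewise not a proof: a uniform lower bound on degrees in the orbit graph does not rule out a bipartition into two large, internally rich components.

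The actual argument in \cite{BGS} is structural rather than statistical. Because a maximal orbit has length $p\pm 1$ (or $2p$) and the conic it lies on has an explicit parametrization, one can \emph{solve} for a point on any given maximal orbit whose other coordinate takes a prescribed value, and in this way connect every maximal orbit directly to the maximal parabolic orbit through $-2/3$, which serves as a hub. Your final paragraph alludes to this hub idea but defers the content to \cite{BGS}; since that is essentially the whole proof, the proposal as written does not stand on its own.
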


\begin{remark} \label{why111} Since the action of the Vieta group $\Gamma$ commutes with reduction modulo $p$, there is a correspondence between integer lifts of $\mod p$ solutions to the Markoff equation and paths from $(1, 1, 1)$ to $\x$ in the Markoff $\mod p$ graph $\G_p$, noting that the special point $(1, 1, 1)$ is fixed under reduction modulo $p$. So, if we can guarantee that $(1, 1, 1)$ is connected to the cage $\C(p)$, we can guarantee integer lifts of points in $\C(p)$. In \cite{bellah}, numerical evidence was given to demonstrate that $(1, 1, 1)$ appears to be either in the cage or very close to it, however no explicit results on the connectedness of $(1, 1, 1)$ to $\C(p)$ have appeared in the literature. 
\end{remark}


Our main result, stated below, provides a lower bound on the rotation order of a Markoff $\mod p$ point, under certain conditions, in terms of the $2$-adic valuation of $p+1$. This can then be used to guarantee the connectedness of $(1, 1, 1)$ to $\C(p)$ for primes $p$ where $p+1$ has large enough $2$-adic valuation.\\

Our main result is as follows. 

\begin{theorem} \label{main} 
   Let $p>5$ be prime. If $\x=(x_1, x_2, x_3) \in \X^*(p)$ is a Markoff $\mod p$ point so that for some $i \in \{1, 2, 3\}$ the coordinate $x_i$ is elliptic and
\[\left(\frac{3x_i+2}{p}\right) = -1,\]
then $2^\nu \mid \ord_{p, i}(\x)$, where $\nu=\nu_2(p+1)$.
\end{theorem}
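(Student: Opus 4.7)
The plan is to work directly with an eigenvalue $\epsilon = \epsilon_{x_i} \in \F_{p^2}$ of $A_{x_i}$ and show, using the Legendre symbol hypothesis, that $\epsilon^{(p+1)/2} = -1$. By Lemma~\ref{ordepsilon}, $\ord_{p,i}(\x) = \ord(\epsilon)$ in $\F_{p^2}^\times$, so this equality will force $2^\nu \mid \ord_{p,i}(\x)$. Since $x_i$ is elliptic, $f_{x_i}(T) = T^2 - 3x_i T + 1$ is irreducible over $\F_p$ and its two roots are $\epsilon$ and its Frobenius conjugate $\bar\epsilon = \epsilon^p$. In particular $\epsilon \bar\epsilon = 1$, so $\epsilon^{p+1} = 1$, and $\epsilon^{(p+1)/2}$ is a square root of $1$ in $\F_{p^2}$, hence lies in $\{\pm 1\}$.

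The heart of the argument is the identity
\[
(\epsilon + 1)^2 = \epsilon^2 + 2\epsilon + 1 = (3x_i + 2)\,\epsilon,
\]
obtained by substituting $\epsilon^2 = 3x_i\epsilon - 1$ from the characteristic equation. I would raise both sides to the $(p+1)/2$-th power. The left side becomes the norm
\[
(\epsilon + 1)^{p+1} = (\epsilon + 1)(\bar\epsilon + 1) = \epsilon\bar\epsilon + (\epsilon + \bar\epsilon) + 1 = 1 + 3x_i + 1 = 3x_i + 2,
\]
while the right side becomes $(3x_i + 2)^{(p+1)/2}\,\epsilon^{(p+1)/2}$. The hypothesis $\left(\frac{3x_i+2}{p}\right) = -1$ guarantees $3x_i + 2 \neq 0$ in $\F_p$, so one factor of $3x_i + 2$ cancels, and Euler's criterion then gives
\[
\epsilon^{(p+1)/2} = (3x_i + 2)^{-(p-1)/2} = \left(\frac{3x_i + 2}{p}\right) = -1,
\]
using that $\pm 1$ are self-inverse.

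Writing $p + 1 = 2^\nu m$ with $m$ odd, the conclusion $\epsilon^{(p+1)/2} = -1 \ne 1$ says $\ord(\epsilon) \nmid 2^{\nu - 1} m$; combined with $\ord(\epsilon) \mid 2^\nu m$ this forces the $2$-adic valuation of $\ord(\epsilon)$ to be exactly $\nu$, and in particular $2^\nu \mid \ord_{p, i}(\x)$. I expect the only real obstacle to be spotting the identity $(\epsilon + 1)^2 = (3x_i + 2)\,\epsilon$, which is precisely what makes the Legendre symbol $\left(\frac{3x_i + 2}{p}\right)$ appear; once that identity is in hand, the remainder is a one-line computation with the norm and Euler's criterion, and none of the Pisano-period machinery advertised in the introduction is required for this particular step.
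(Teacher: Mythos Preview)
Your proof is correct and is cleaner than the paper's. Both arguments ultimately exploit the norm map $N:\F_{p^2}^\times\to\F_p^\times$, but the routes differ. The paper builds an explicit square root $\eta=\alpha+\beta\sqrt{\Delta}$ of $\epsilon$ (with $\alpha,\beta$ written out in terms of $x_i$ and $\Delta$), checks by hand that $N(\eta)=-1$, and then argues via a generator of $\F_{p^2}^\times$ and the norm kernel that $2^{\nu+1}\mid|\eta|$, whence $2^\nu\mid|\epsilon|$. You bypass all of that by spotting the identity $(\epsilon+1)^2=(3x_i+2)\epsilon$ and raising it to the $(p+1)/2$ power: the left side collapses to the norm $N(\epsilon+1)=3x_i+2$, and Euler's criterion on the right side yields $\epsilon^{(p+1)/2}=-1$ directly. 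Your approach is shorter, avoids the coordinate computations with $\alpha,\beta$, and in fact proves the slightly sharper statement $\nu_2(\ord_{p,i}(\x))=\nu$ rather than just $2^\nu\mid\ord_{p,i}(\x)$. The paper's approach, on the other hand, makes the existence of a square root of $\epsilon$ in $\F_{p^2}$ explicit, which is conceptually in line with the generalization of Vince's argument that the authors are aiming for.
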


\section{Connection to Pisano Periods} \label{sec:3}

We first consider a key connection to Pisano periods of the Fibonacci sequence. We have the following observation, whose proof is given in Section 3.5 of \cite{bellah}. 

\begin{proposition}
    For all $n \in \Z_{>0}$, we have 
    \[\rot_i^n(1,1,1) = \sigma (1, f_{2n-1}, f_{2n+1}),\] where $f_n$ is the nth value in the Fibonacci Sequence and $\sigma$ is a suitable permutation of the coordinates.
\end{proposition}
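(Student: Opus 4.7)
The plan is to prove the statement by induction on $n$, first for the particular rotation $\rot_1$ (which keeps the first coordinate fixed), and then observe that the cases of $\rot_2$ and $\rot_3$ reduce to this one by permuting coordinates. Concretely, from the definition $\rot_1(1, a, b) = (1, b, 3b - a)$, if we write $\rot_1^n(1,1,1) = (1, a_n, b_n)$, the recurrence we obtain is $a_{n+1} = b_n$ and $b_{n+1} = 3b_n - a_n$, with initial values $a_1 = 1$, $b_1 = 2$.

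For the base case $n=1$, one checks directly that $\rot_1(1,1,1) = (1, 1, 2) = (1, f_1, f_3)$. For the inductive step, the key identity to verify is
\[f_{2n+3} = 3\,f_{2n+1} - f_{2n-1},\]
which follows from the Fibonacci recurrence in a few lines: $f_{2n+3} = f_{2n+2} + f_{2n+1} = 2f_{2n+1} + f_{2n} = 3 f_{2n+1} - f_{2n-1}$, using $f_{2n} = f_{2n+1} - f_{2n-1}$. Assuming $(1, a_n, b_n) = (1, f_{2n-1}, f_{2n+1})$, this identity gives $(1, a_{n+1}, b_{n+1}) = (1, f_{2n+1}, f_{2n+3})$, completing the induction for $\rot_1$.

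For the other two rotations, essentially the same computation works but with a different coordinate held fixed. Starting from the symmetric point $(1,1,1)$, a short computation shows $\rot_2^n(1,1,1) = (f_{2n-1}, 1, f_{2n+1})$ and $\rot_3^n(1,1,1) = (f_{2n-1}, f_{2n+1}, 1)$, each of which is a coordinate permutation of $(1, f_{2n-1}, f_{2n+1})$. Thus the formula holds for every $i \in \{1,2,3\}$ with the appropriate permutation $\sigma$, and the statement follows.

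I do not expect any serious obstacle: the argument is a two-line induction once the Fibonacci identity $f_{2n+3} = 3f_{2n+1} - f_{2n-1}$ is in hand, and the only bookkeeping is to track the permutation $\sigma$ according to which coordinate is fixed by $\rot_i$.
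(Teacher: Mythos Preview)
Your proposal is correct. The paper does not prove this proposition in-text but refers to Section~3.5 of \cite{bellah}; your direct induction via the identity $f_{2n+3}=3f_{2n+1}-f_{2n-1}$ is the standard argument and is complete as written.
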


Recall that the \textit{Pisano period} $\pi(n)$ of the Fibonacci sequence is the smallest integer $k$ so that
\[F_k \equiv 0 (\mod n) \text{ and } F_{k+1} \equiv 1 (\mod n).\] 
That is, $\pi(n)$ is the length of the Fibonacci sequence modulo $n$. We have the following.

\begin{lemma} \label{Even pisano}
    The Pisano period $\pi(n)$ is even for all $n>2$.
\end{lemma}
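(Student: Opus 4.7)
The plan is to recast the Pisano period as the order of a matrix and then use determinants. Consider the Fibonacci matrix
\[M = \begin{pmatrix} 1 & 1 \\ 1 & 0 \end{pmatrix},\]
which satisfies $M^k = \begin{pmatrix} F_{k+1} & F_k \\ F_k & F_{k-1} \end{pmatrix}$ for all $k \geq 1$. The key observation is that the Pisano period $\pi(n)$ is precisely the multiplicative order of $M$ in $\GL_2(\Z/n\Z)$: the condition $F_k \equiv 0$ and $F_{k+1} \equiv 1 \pmod n$ together with the recurrence forces $F_{k-1} \equiv 1 \pmod n$ as well, so $M^k \equiv I \pmod n$, and conversely.

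Once this identification is made, I would take determinants. Since $\det(M) = -1$, we have $\det(M^k) \equiv (-1)^k \pmod n$. If $k = \pi(n)$ then $M^k \equiv I$, so $(-1)^{\pi(n)} \equiv 1 \pmod n$. For $n > 2$, we have $-1 \not\equiv 1 \pmod n$, so $\pi(n)$ must be even.

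There is no serious obstacle here; the only subtlety is confirming the equivalence between the usual definition of $\pi(n)$ (via $F_k \equiv 0$, $F_{k+1} \equiv 1$) and the order of $M$ in $\GL_2(\Z/n\Z)$. This is handled cleanly by noting that the entries of $M^k$ encode three consecutive Fibonacci values, and $M^k \equiv I$ forces all three of $F_{k-1}, F_k, F_{k+1}$ to match those of $M^0 = I$ modulo $n$.
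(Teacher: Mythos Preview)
Your proposal is correct and is essentially the same argument as the paper's: both recast $\pi(n)$ as the order of the Fibonacci matrix in $\GL_2(\Z/n\Z)$ and then take determinants to force $(-1)^{\pi(n)}\equiv 1\pmod n$. If anything, your version is slightly more careful in justifying why $M^{\pi(n)}\equiv I$ (by using the recurrence to recover $F_{\pi(n)-1}\equiv 1$), but the core idea is identical.
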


We include the proof of this well-known result for completeness. 

\begin{proof}
    Let
    \[\mathcal{F} = \begin{pmatrix}
        1 & 1 \\ 1 & 0
    \end{pmatrix} \in \text{GL}_2(\Z/n\Z),\]
    and observe that for integers $n >2$ we have
    \[\mathcal{F}^n = \begin{pmatrix} f_{n+1} & f_n \\ f_n & f_{n-1} \end{pmatrix},\]
    and so $\mathcal{F}^{\pi(n)}=I$. 
    We have,
    $\det(\mathcal{F}^{\pi(n)}) = \text{det}(\mathcal{F})^{\pi(n)} = (-1)^{\pi(n)}.$
    But from above, we also know 
    $\text{det}(\mathcal{F}^{\pi(n)}) = \text{det}(I) = 1.$
    Thus we must have that 
    \[(-1)^{\pi(n)} = 1.\]
    Therefore, $\pi(n)$ must be even for all integers $n>2$. Note that $\pi(2)=3$ and so this result holds only for $n>2$. 
\end{proof}

\begin{proposition}\label{order_of_(1,1,1)}
    Let $p>5$ be prime and $(1,1,1) \in \X^*(p)$. Then,
    \[\ordpi(1,1,1) = \frac{\pi(p)}{2}.\]
\end{proposition}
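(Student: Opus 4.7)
The plan is to translate the condition $\rot_i^n(1,1,1) \equiv (1,1,1) \pmod p$ into a statement about the Fibonacci sequence modulo $p$, and then recognize the resulting condition as a divisibility by $\pi(p)$.

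First I would apply the preceding proposition, which says $\rot_i^n(1,1,1) = \sigma(1, f_{2n-1}, f_{2n+1})$ for a suitable coordinate permutation $\sigma$. Since $(1,1,1)$ is fixed by every coordinate permutation, the congruence $\rot_i^n(1,1,1) \equiv (1,1,1) \pmod p$ is equivalent to the two congruences
\[ f_{2n-1} \equiv 1 \pmod p, \qquad f_{2n+1} \equiv 1 \pmod p, \]
independent of what $\sigma$ actually is. Using the Fibonacci recurrence $f_{2n+1} = f_{2n} + f_{2n-1}$, this pair of congruences is in turn equivalent to
\[ f_{2n-1} \equiv 1 \pmod p, \qquad f_{2n} \equiv 0 \pmod p, \]
i.e.\ to $(f_{2n}, f_{2n+1}) \equiv (f_0, f_1) \pmod p$.

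By the definition of the Pisano period, the latter condition holds exactly when $\pi(p) \mid 2n$. Therefore $\ord_{p,i}(1,1,1)$ is the smallest positive integer $n$ with $\pi(p) \mid 2n$. By Lemma \ref{Even pisano}, $\pi(p)$ is even (here I use $p > 5 > 2$), so writing $\pi(p) = 2m$ the divisibility $\pi(p) \mid 2n$ simplifies to $m \mid n$, and the minimal positive such $n$ is $m = \pi(p)/2$, which gives the claimed equality.

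The only subtlety, and the step I would be most careful about, is the reduction of the condition on $\sigma(1, f_{2n-1}, f_{2n+1})$ to a purely Fibonacci condition: because all three coordinates of the target $(1,1,1)$ coincide, the permutation $\sigma$ is irrelevant, and no case analysis on $i$ or $n$ is required. After that, the argument is a routine translation into the definition of $\pi(p)$ combined with the parity lemma.
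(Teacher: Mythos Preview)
Your proof is correct and follows essentially the same route as the paper: both translate $\rot_i^n(1,1,1)\equiv(1,1,1)$ into the Fibonacci congruences $f_{2n-1}\equiv f_{2n+1}\equiv 1\pmod p$ and then invoke the definition of $\pi(p)$ together with Lemma~\ref{Even pisano}. The only cosmetic difference is that the paper phrases the last step as two inequalities while you phrase it as the divisibility $\pi(p)\mid 2n$; your handling of the permutation $\sigma$ by noting $(1,1,1)$ is permutation-invariant is a nice touch that the paper sidesteps by specializing to $i=1$.
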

\begin{proof} Observe that for all integers $n \geq 1$ we have
\[\rot_1^n(1, 1, 1)=(1, f_{2n-1}, f_{2n+1}),\]
and note that by Lemma \ref{Even pisano} we know that $\pi(p)/2$ is indeed an integer. 
Furthermore, by definition of the Pisano period we have $f_{\pi(p)} \equiv 0\pmod{p}$ and $f_{\pi(p)+1} \equiv 1\pmod{p}$ and so $f_{\pi(p)-1}=f_{\pi(p)+1}+f_{\pi(p)} \equiv 1 (\mod p)$. Hence, $\rot_1^{\pi(n)}(1, 1, 1) =(1, 1, 1)$, and so $\ord_{p,1}(1,1,1) \leq \frac{\pi(p)}{2}$. But since $\pi(p)$ is the smallest positive integer $m$ such that $f_{m-1} \equiv f_{m+1} \equiv 1\pmod{p}$, we must conversely have $\frac{\pi(p)}{2} \leq \text{ord}_{p,1}(1,1,1)$. Note that the proofs for $\text{ord}_{p,2}(1,1,1)$ and $\text{ord}_{p,3}(1,1,1)$ are symmetric to the above. 
\end{proof}

\begin{remark}
    Proposition \ref{order_of_(1,1,1)} tells us that studying rotation orders of $(1, 1, 1)$ is equivalent to studying Pisano periods of the Fibonacci sequence. Since Pisano periods have been extensively studied, this connection is particularly fruitful. In this paper, we demonstrate one consequence of this observation. In particular, we will exploit the following result.
\end{remark}

\begin{theorem}[Theorem 4 of \cite{vince}] \label{vinceResult}
        Let $p = \pm 2\pmod{5}$ and $p+1 = 2^\nu \cdot k$, where $k$ is odd (that is, $\nu=\nu_2(p+1)$). Then $2^{\nu+1} \mid \pi(p)$. 
\end{theorem}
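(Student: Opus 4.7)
The plan is to analyze the eigenvalues of the Fibonacci matrix $\mathcal{F} = \begin{pmatrix} 1 & 1 \\ 1 & 0 \end{pmatrix}$ viewed in $\GL_2(\F_p)$, using the fact (implicit in the proof of Lemma \ref{Even pisano}) that $\pi(p)$ is exactly the order of $\mathcal{F}$ in this group.

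First, I would observe that the characteristic polynomial $T^2 - T - 1$ of $\mathcal{F}$ has discriminant $5$. When $p \equiv \pm 2 \pmod{5}$, quadratic reciprocity (using that $5 \equiv 1 \pmod 4$) gives $\left(\frac{5}{p}\right) = \left(\frac{p}{5}\right) = -1$, so $5$ is a non-square modulo $p$. Therefore $\mathcal{F}$ has no eigenvalues in $\F_p$; its eigenvalues $\alpha, \beta$ are Galois conjugates lying in $\F_{p^2} \setminus \F_p$, so $\beta = \alpha^p$. Combined with $\alpha\beta = -1$ (the constant term of the characteristic polynomial), this yields the key identity $\alpha^{p+1} = \alpha \cdot \alpha^p = \alpha\beta = -1$ in $\F_{p^2}^\times$.

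Next, I would write $p+1 = 2^\nu k$ with $k$ odd. The identity $\alpha^{2^\nu k} = -1 \neq 1$ shows that $\ord(\alpha)$ does not divide $2^\nu k = p+1$, while squaring gives $\alpha^{2^{\nu+1} k} = 1$, so $\ord(\alpha) \mid 2^{\nu+1} k$. Writing $\ord(\alpha) = 2^a b$ with $b$ odd, the first divisibility forces $b \mid k$ and $a \leq \nu+1$; the non-divisibility, combined with $b \mid k$, then forces $a = \nu + 1$. Hence $2^{\nu+1} \mid \ord(\alpha)$ exactly.

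Finally, I would conclude by noting that since $\mathcal{F}$ has distinct eigenvalues it is diagonalizable over $\F_{p^2}$, so its order in $\GL_2(\F_{p^2})$ is a common multiple of $\ord(\alpha)$ and $\ord(\beta)$; in particular it is divisible by $\ord(\alpha)$. Since the order of $\mathcal{F}$ inside $\GL_2(\F_p)$ coincides with its order inside the overring $\GL_2(\F_{p^2})$, we obtain $2^{\nu+1} \mid \pi(p)$, as desired. The only nontrivial step is the middle one — the $2$-adic bookkeeping that converts $\alpha^{p+1} = -1$ into sharp information about $\nu_2(\ord(\alpha))$ — but this is elementary group theory once the identity $\alpha^{p+1} = -1$ is established. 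The quadratic reciprocity input and the Frobenius identification $\beta = \alpha^p$ are standard.
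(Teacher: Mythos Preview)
Your proof is correct. The paper itself does not prove Theorem~\ref{vinceResult} directly (it is cited from \cite{vince}); rather, the paper proves the generalization Theorem~\ref{main}, which together with Proposition~\ref{order_of_(1,1,1)} recovers Vince's result as the special case $x=1$. Comparing your argument to that specialization is illuminating.

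At the core the two arguments are the same, just packaged differently. Your Fibonacci eigenvalue $\alpha = (1+\sqrt{5})/2$ is exactly the element the paper calls $\eta$ (the constructed square root of $\epsilon = (3+\sqrt{5})/2$), and your key identity $\alpha^{p+1} = \alpha\cdot\alpha^p = \alpha\beta = -1$ is precisely the paper's computation $N(\eta)=-1$, since in $\F_{p^2}$ one has $N(\eta)=\eta\cdot\bar\eta=\eta\cdot\eta^p=\eta^{p+1}$. The $2$-adic step is then identical in spirit: the paper phrases it via the kernel $K$ of the norm map (cyclic of order $p+1$, containing $\eta^2$ but not $\eta$), while you phrase it as ``$\alpha^{p+1}=-1$ forces $\ord(\alpha)\mid 2(p+1)$ but $\ord(\alpha)\nmid p+1$.'' Your route through the Fibonacci matrix and Frobenius is a bit more streamlined for this particular statement; what the paper's more elaborate construction of $\eta$ (via the Legendre-symbol condition on $3x+2$) buys is the generalization to arbitrary elliptic coordinates $x$, which is the whole point of Theorem~\ref{main}.
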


\section{Proof of Main Result and Consequences} \label{sec:4}

Proposition \ref{orders_upperbound} gives upper bounds on the rotation order of a Markoff $\mod p$ point. Our main result (Theorem \ref{main}) generalizes Vince's result (Theorem \ref{vinceResult}) in order to place a a \textit{lower} bound on the rotation order of a Markoff $\mod p$ point under certain conditions in terms of the $2$-adic valuation of $p+1$.\\



The proof Theorem \ref{main} rests in part upon properties of the norm mapping on a quadratic integer ring mod $p$. We will need to use the following well-established properties of the norm.   

\begin{lemma}
    Let $p$ be a prime, and $\Delta \in \Z_p$ be a quadratic non-residue mod $p$. Define the \textit{norm} to be the mapping 
    \begin{align*}
        N: \Z_p[\sqrt{\Delta} ] &\rightarrow \Z_p, \alpha + \beta\sqrt{\Delta} \mapsto  \alpha^2 - \Delta \beta^2.
    \end{align*}
    Then the norm restricted to the multiplicative group of $\Z_p[\sqrt{\Delta}]$
    \[ N:\Big( \Z_p[\sqrt{\Delta}] \Big)^\times \rightarrow \Z_p^\times \]
    is a surjective group homomorphism. 
\end{lemma}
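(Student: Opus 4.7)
The plan is to identify the ring in question with the finite field $\F_{p^2}$ and recognize the map $N$ as the standard field norm, at which point both claims become consequences of Galois theory for finite fields.

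First I would observe that since $\Delta$ is a quadratic non-residue, the polynomial $T^2-\Delta$ is irreducible over $\F_p$, so the identification $\Z_p[\sqrt{\Delta}] \cong \F_p[T]/(T^2-\Delta) \cong \F_{p^2}$ holds. Under this identification, the non-trivial element $\sigma$ of $\mathrm{Gal}(\F_{p^2}/\F_p)$ is the Frobenius $x \mapsto x^p$, and a direct calculation (or the fact that $\sigma$ must send $\sqrt{\Delta}$ to its other root) shows that $\sigma(\alpha+\beta\sqrt{\Delta}) = \alpha - \beta\sqrt{\Delta}$. Consequently
\[
N(\alpha+\beta\sqrt{\Delta}) = (\alpha+\beta\sqrt{\Delta})(\alpha-\beta\sqrt{\Delta}) = x\cdot\sigma(x) = x^{p+1},
\]
so $N$ agrees with the field norm $\F_{p^2}\to\F_p$. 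Since $\sigma$ fixes $N(x)$, the image lies in $\F_p$, and since $N$ takes units to units, the restriction to $\F_{p^2}^{\times}\to\F_p^{\times}$ is well-defined.

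Next I would verify the homomorphism property: using multiplicativity of $\sigma$,
\[
N(xy) = (xy)\sigma(xy) = x\sigma(x)\cdot y\sigma(y) = N(x)N(y).
\]
For surjectivity, the main step is to use that $\F_{p^2}^{\times}$ is cyclic of order $p^2-1$. If $g$ is a generator, then $N(g) = g^{p+1}$ has multiplicative order $(p^2-1)/\gcd(p+1,p^2-1) = (p^2-1)/(p+1) = p-1$, so $N(g)$ generates the cyclic group $\F_p^{\times}$ of order $p-1$. Hence the image is all of $\F_p^{\times}$.

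There is no serious obstacle here; the only subtlety is notational, namely confirming that the expression $\alpha^2-\Delta\beta^2$ matches the field-theoretic norm $x\cdot x^p$, so that all of the structural facts about finite field norms can be invoked.
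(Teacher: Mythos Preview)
Your argument is correct: identifying $\Z_p[\sqrt{\Delta}]$ with $\F_{p^2}$, recognizing $N$ as the field norm $x\mapsto x^{p+1}$, and deducing surjectivity from the cyclicity of $\F_{p^2}^\times$ is the standard route, and every step you wrote checks out. Note that the paper itself does not prove this lemma at all---it introduces it as a ``well-established'' property of the norm and moves on---so there is no alternative approach to compare against; your proof simply fills in what the paper leaves to the reader.
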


With this tool in hand, we may now prove our main result.

\begin{proof}[Proof of Theorem \ref{main}.] For convenience, set $x=x_i$, $\epsilon=\epsilon_i$ and $\Delta=\Delta_{x_i}$. 
Recall that $\epsilon$ is one of the roots of $f(T)=T^2-3x T+1$. That is, 
\[\epsilon=\frac{3x \pm\sqrt{\Delta}}{2},\]
where $\Delta$ is the discriminant of $f$. Furthermore, recall that $x$ being elliptic gives
$\left(\frac{\Delta}{p}\right)=-1,$
and so $\epsilon$ is in the field $\F_{p^2}$ under the identification 
\[\F_{p^2} \cong \F_p[\sqrt{\Delta}].\]

\smallskip

We first show that $\epsilon$ is a square in $\F_{p^2}$. To this end, denote $a:= \frac{3x}{2}$ and $b = \frac{1}{2}$ (so that $\epsilon = a\pm b\sqrt{\Delta}$).
Observe that
    \begin{align*}
        \text{Tr}(\epsilon) + 2 = \epsilon + \overline{\epsilon} + 2
         :=
        \frac{3x+\sqrt{\Delta}}{2} + \frac{3x-\sqrt{\Delta}}{2} + 2 
        = 2a+2.
    \end{align*}
Thus, our assumption that $3x+2=\Tr(\epsilon)+2$ is quadratic non-residue gives
    \[\left(\frac{(\text{Tr}(\epsilon)+2) \big/ (4\Delta)}{p}\right) 
    = \left(\frac{(\text{Tr}(\epsilon)+2) \big/ \Delta}{p}\right)  = +1\]
by the multiplicativity of the Legendre symbol and the fact that $\Delta$ is a quadratic non-residue. But 
    \[\frac{\text{Tr}(\epsilon)+2}{4\Delta} = \frac{a+1}{2\Delta},\] 
and so we may define $\alpha := \frac{1}{4} \sqrt{\frac{2\Delta}{a+1}}$ and $\beta := \sqrt{\frac{a+1}{2\Delta}}$ which are non-zero elements of $\F_{p}$ by above.
With these definitions, we have 
    \begin{align*}
        \big( \; \alpha+\beta\sqrt{\Delta} \; \big)^2 &= (\alpha^2 + \beta^2\Delta) + (2\alpha\beta)\sqrt{\Delta} 
        = \epsilon
    \end{align*}
as computation confirms that 
    $\alpha^2 + \beta^2\Delta = a$ and $2\alpha\beta = b$. Denoting $\eta := \alpha+\beta\sqrt{\Delta} \in \F_{p^2}$, we thus have $\eta^2=\epsilon$, as needed. 

\smallskip

Next, we show $N(\eta) = -1$. Using the identity $\alpha^2 + \beta^2\Delta = a$ from above, we have 
\begin{align*}
    N(\eta) &=
    N(\alpha +\beta\sqrt{\Delta}) \\
    &= \alpha^2 - \Delta \beta^2 \\
    &= (\alpha^2 + \Delta \beta^2) - 2\Delta \beta^2 \\
    &= a - 2\Delta\left(\frac{a+1}{2\Delta}\right)  \\
    &= -1.
\end{align*}

\smallskip

Next, let $\nu:= \nu_2(p+1)$ denote the $2$-adic valuation of $p+1$; that is, \[p+1 = 2^\nu \cdot k\] where $k$ is odd. Let $K$ denote the elements in $\F_{p^2} = \F_p[\sqrt{\Delta}]$ of norm 1; that is $K$ is the kernel of the restricted norm map $N: \F_{p^2}^\times \to \F_p^\times$. Since $\F_{p^2}$ is a finite field of order $p^2$, we know that $\F_{p^2}^\times$ is a cyclic group of order $p^2-1$, and since $N$ is a surjective homomorphism we know that $K$ is cyclic of order $(p^2-1)/(p-1)=p+1$.
Now let $g$ be a generator of $\F_{p^2}^\times$. Then any element of $K$ must take the form $g^{(p-1)j}$ for some integer $j$. Since $\eta^2$ belongs to $K$ but $\eta$ does not (recalling that we've shown $N(\eta)=-1$), there must be an integer $j$ such that $\eta = g^{(p-1)(j+1/2)}$. Thus the order of $\eta$ in $\F_{p^2}^\times$, which we'll denote by $|\eta|$, equals the smallest positive integer $m$ such that $p^2 - 1 \mid m(p-1)(j+1/2)$, or, equivalently, such that $2(p+1) \mid m (2j+1)$. 
Since $2j+1$ is odd, we obtain $2^{\nu + 1} \; \big| \; |\eta|$, and since $\epsilon=\eta^2$ we have $2^\nu \mid |\epsilon|$. By Lemma \ref{ordepsilon} we know that $|\epsilon|=\ord_{p, i}(\x)$, as needed.
\end{proof}

Theorem \ref{main} implies that for primes $p$ where $p+1$ has large enough $2$-adic valuation, points satisfying the conditions of Theorem \ref{main} are in the cage $\C(p)$ (see Definition \ref{cage}). We have the following.

\begin{corollary} \label{cor1}
    Let $p>5$ be prime and let $\x=(x_1, x_2, x_3) \in \X^*(p)$ be a Markoff $\mod p$ point satisfying the conditions of Theorem \ref{main}. If 
    \[\nu_2(p+1) > \log_2\left(\frac{p+1}{2}\right),\]
    then $\x \in \C(p)$. 
\end{corollary}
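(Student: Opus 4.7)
The plan is to show that the coordinate $x_i$ singled out by the hypotheses of Theorem \ref{main} is in fact maximal elliptic, which by Definition \ref{maxord} and Definition \ref{cage} immediately places $\x$ in $\C(p)$. So the whole corollary reduces to pinning down $\ord_{p,i}(\x) = p+1$.

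First, I would unpack the numerical hypothesis $\nu > \log_2\!\bigl((p+1)/2\bigr)$. Writing $p+1 = 2^\nu k$ with $k$ odd (as in the statement of Theorem \ref{vinceResult}), the inequality rearranges to $2^{\nu+1} > p+1 = 2^\nu k$, i.e.\ $k < 2$. Since $k$ is a positive odd integer, this forces $k = 1$, so $p+1 = 2^\nu$. In other words, the corollary is only nontrivial when $p$ is a Mersenne prime, and in that case the full prime power structure of $p+1$ is controlled by its $2$-adic valuation alone.

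With that observation in hand, I would combine the lower bound from Theorem \ref{main} with the elliptic upper bound from Proposition \ref{orders_upperbound}. Theorem \ref{main} gives $2^\nu \mid \ord_{p,i}(\x)$, while $x_i$ being elliptic gives $\ord_{p,i}(\x) \mid p+1$. Substituting $p+1 = 2^\nu$ from the previous step squeezes these two divisibilities together into $\ord_{p,i}(\x) = 2^\nu = p+1$, so $x_i$ is maximal elliptic by Definition \ref{maxord}. Hence $\x$ is a maximal triple and therefore a vertex of $\C(p)$.

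There is no real obstacle here — the corollary is a direct combination of Theorem \ref{main} with Proposition \ref{orders_upperbound}, and the only content beyond plugging in is the (brief) observation that the stated inequality $\nu > \log_2((p+1)/2)$ is simply a reformulation of $p+1$ being a power of $2$.
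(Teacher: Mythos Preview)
Your proof is correct and follows essentially the same approach as the paper: combine the divisibility $2^\nu \mid \ord_{p,i}(\x)$ from Theorem \ref{main} with $\ord_{p,i}(\x) \mid p+1$ from Proposition \ref{orders_upperbound} to force $\ord_{p,i}(\x) = p+1$. The only cosmetic difference is that you first rewrite the hypothesis as $p+1 = 2^\nu$ (making the Mersenne condition explicit), whereas the paper argues directly via $(p+1)/2 < 2^\nu \leq \ord_{p,i}(\x)$ together with the observation that a divisor of $p+1$ exceeding $(p+1)/2$ must equal $p+1$.
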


\begin{proof} For convenience, set $\nu=\nu_2(p+1)$. Observe that
\begin{align*}
    \nu > \log_2\left(\frac{p+1}{2}\right) \quad\Rightarrow\quad  \frac{p+1}{2} < 2^\nu \;\leq \; \ord_{p, i}(\x), 
\end{align*}
where the final inequality follows by Theorem \ref{main}. So, $\ord_{p, i}(\x) = p+1$ which gives $\x \in \C(p)$ by Lemma \ref{orders_upperbound}. \end{proof}

In particular, we have the following. 

\begin{corollary} \label{cor2} If $p \equiv \pm 2 (\mod 5)$ is a Mersenne prime, then $(1, 1, 1) \in \C(p)$. \end{corollary}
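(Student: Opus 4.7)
The plan is to apply Corollary \ref{cor1} to the point $(1,1,1)$, choosing $i = 1$ (any coordinate works by symmetry). This reduces the task to verifying two hypotheses: that $(1,1,1)$ satisfies the conditions of Theorem \ref{main}, and that $\nu_2(p+1) > \log_2((p+1)/2)$.

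The second hypothesis is immediate from the Mersenne assumption. Writing $p = 2^n - 1$, we have $p+1 = 2^n$, so $\nu_2(p+1) = n$ while $\log_2((p+1)/2) = n - 1$, giving the strict inequality for every Mersenne prime $p > 5$. This is the single place where the Mersenne hypothesis is used, and it saturates the bound in Corollary \ref{cor1} as tightly as possible.

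For the hypotheses of Theorem \ref{main}, the first step is to compute the characteristic polynomial $f_1(T) = T^2 - 3T + 1$ and note that its discriminant is $\Delta_1 = 9 - 4 = 5$. The arithmetic coincidence $3 \cdot 1 + 2 = 5$ then collapses both the ellipticity condition (which requires $\Delta_1$ to be a non-residue mod $p$) and the Legendre-symbol condition $\left(\tfrac{3 \cdot 1 + 2}{p}\right) = -1$ into the single requirement $\left(\tfrac{5}{p}\right) = -1$. By quadratic reciprocity this is equivalent to $p \equiv \pm 2 \pmod 5$, which is precisely the remaining hypothesis of the corollary.

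There is no genuine obstacle: the proof is simply a matter of combining Corollary \ref{cor1} with the two observations that $p = 2^n - 1$ forces $\nu_2(p+1)$ to be maximal and that the identity $\Delta_1 = 3\cdot 1 + 2 = 5$ unifies the ellipticity and non-residue conditions at $x_1 = 1$.
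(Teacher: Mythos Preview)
Your proof is correct and follows essentially the same route as the paper: both verify that $\Delta_1 = 3\cdot 1 + 2 = 5$, use quadratic reciprocity to convert $p \equiv \pm 2 \pmod 5$ into the two non-residue conditions of Theorem \ref{main}, and then use $p = 2^n - 1$ to get $\nu_2(p+1) = n > n-1 = \log_2((p+1)/2)$. The only cosmetic difference is that you cite Corollary \ref{cor1} explicitly while the paper cites Theorem \ref{main} directly (implicitly reusing the logic of Corollary \ref{cor1}).
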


\begin{proof} 
Observe that $\epsilon_1=(3+\sqrt{5})/2$
and so $\Delta_1=\Tr(\epsilon_1)+2 = 5$. The assumption that $p \equiv \pm 2\pmod{5}$ implies that $\Delta_1$ and $\text{Tr}(\epsilon_1)+2$ are both quadratic non-residues mod $p$, by quadratic reciprocity. So the conditions of Theorem \ref{main} hold for $(1, 1, 1)$. Furthermore, since $p$ is Mersenne we can write $p=2^n-1$ and so 
\[\nu_2(p+1)=n > n-1 = \log\left(\frac{p+1}{2}\right),\]
which gives $(1, 1, 1) \in \C(p)$ by Theorem \ref{main}. 
\end{proof}

\begin{remark} As discussed in Remark \ref{why111}, if $(1, 1, 1)$ is connected to the cage, then all points $\x$ in $\C(p)$ have integer lifts. Thus, Theorem \ref{mainprelim} follows as a consequence of Corollary \ref{cor2}. 
\end{remark}


There are several ways in which future research could use or expand upon Theorem \ref{main}. For example, it may be fruitful to explore any probabilistic conclusions that can be drawn from the result, such as a lower bound on the 
percentage of points in $\mathcal{G}_p$ that lie in the cage for primes $p$ satisfying the conditions of the theorem. 
Additionally, further work will be needed to understand how the proof of this theorem might be generalized, for example by considering other divisors of $p+1$.



\section{Density of Primes with \texorpdfstring{$(1, 1, 1)$}{(1, 1, 1)} in the Cage} \label{sec:5}

We conclude by investigating how often $(1, 1, 1)$ is in the cage. Define
    \begin{equation}\label{density}
    \delta := \lim_{x\to \infty}\frac{ \# \{\text{primes } p\leq x : (1,1,1) \in \mathcal{C}(p) \}}{\# \{\text{primes } p\leq x \}}.
    \end{equation}

 The following Proposition will yield an upper bound on $\delta$.

\begin{proposition}\label{hyp_cage}
            If $(1,1,1)$ is hyperbolic, then $(1,1,1)$ is not in $\C(p)$.
        \end{proposition}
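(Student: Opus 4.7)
The plan is to reduce membership in $\C(p)$ to a question about the multiplicative order of a single element of $\F_p^\times$, and then exploit the algebraic coincidence that this element is the square of the golden ratio.

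First I would observe that the three coordinates of $(1,1,1)$ are all equal to $1$, and the hypothesis that $(1,1,1)$ is hyperbolic means $\Delta_1 = 5$ is a nonzero square mod $p$. Consequently every coordinate is hyperbolic, so none is elliptic or parabolic. By Definition~\ref{maxord}, the only way $(1,1,1)$ can lie in $\C(p)$ is to be \emph{maximal hyperbolic}, i.e., to satisfy $\ord_{p,1}(1,1,1) = p-1$. Thus it suffices to show $\ord_{p,1}(1,1,1) < p-1$.

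Next I would invoke Lemma~\ref{ordepsilon}, which in the hyperbolic case identifies $\ord_{p,1}(1,1,1)$ with the order of a root $\epsilon_1$ of $f(T) = T^2 - 3T + 1$ in $\F_p^\times$. Explicitly, $\epsilon_1 = (3+\sqrt{5})/2$. The central observation is that $\epsilon_1 = \phi^2$, where $\phi = (1+\sqrt{5})/2$ is the golden ratio; this is immediate from the identity $\phi^2 = \phi + 1$. Since $5$ is a QR mod $p$, both $\phi$ and $\epsilon_1$ lie in $\F_p^\times$, and $\epsilon_1$ is manifestly a nonzero square there.

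Finally, since the nonzero squares in $\F_p^\times$ form a subgroup of index $2$ and order $(p-1)/2$, the order of $\epsilon_1$ divides $(p-1)/2$, hence is strictly less than $p-1$ (using $p>2$). This gives $\ord_{p,1}(1,1,1) \leq (p-1)/2 < p-1$, so the coordinate $x_1 = 1$ is not maximal hyperbolic. Combined with the first paragraph, this yields $(1,1,1) \notin \C(p)$. I do not foresee a real obstacle in this argument: the entire proof pivots on recognizing the identity $\epsilon_1 = \phi^2$, after which the conclusion follows from the elementary fact that squares in $\F_p^\times$ cannot generate the whole group.
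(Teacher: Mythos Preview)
Your proposal is correct and follows essentially the same route as the paper's proof: both hinge on the identity $\epsilon_1 = \bigl(\tfrac{1+\sqrt{5}}{2}\bigr)^2$, which forces $\epsilon_1$ to be a square in $\F_p^\times$ and hence to have order at most $(p-1)/2 < p-1$. Your write-up is slightly more explicit in ruling out the elliptic and parabolic cases first, but the core argument is identical.
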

        
        \begin{proof}
            Let $p$ be a prime such that the value $1$ is hyperbolic. Then, by definition
            $\Delta_1 = (3\cdot 1)^2 - 4 = 5$
            is a non-zero square mod $p$. That is, $\sqrt{5} \in \mathbb{F}_p$ and so
            $\epsilon_1 = \frac{3 + \sqrt{5}}{2} \in \F_p. $
            Observe that $\epsilon_1$ is a nonzero square in $\F_p$, since 
            \[\frac{3 + \sqrt{5}}{2} = \left(\frac{1+\sqrt{5}}{2}\right)^2,\]
            and so $\epsilon$ has order at most $\frac{|\F_p^\times|}{2} < p-1$.
            Thus, $1$ is not maximal by Lemma \ref{orders_upperbound} and so $(1,1,1)$ is not in the cage $\C(p)$. 
        \end{proof}

\begin{theorem}
With $\delta$ defined as in Equation (\ref{density}) we have $\delta<0.5$. \end{theorem}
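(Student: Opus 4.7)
The plan is two-step: first apply Proposition \ref{hyp_cage} together with Dirichlet's theorem to obtain the weak bound $\delta \le 1/2$, and then exhibit a positive-density subset of the remaining primes for which $(1,1,1) \notin \C(p)$ in order to sharpen to a strict inequality.

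For the upper bound, observe that $(1,1,1)$ is hyperbolic precisely when $\Delta_1 = 5$ is a nonzero quadratic residue modulo $p$; by quadratic reciprocity this occurs exactly when $p \equiv \pm 1 \pmod{5}$, which is a set of natural density $1/2$ among the primes by Dirichlet's theorem. Proposition \ref{hyp_cage} excludes every such prime from the numerator of $\delta$, giving $\delta \le 1/2$ immediately.

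For strict inequality, the plan is to use the factorization $\epsilon_1 = \phi^2$, where $\phi = (1+\sqrt{5})/2$ is the golden ratio viewed in $\mathbb{F}_{p^2}$ for $p \equiv \pm 2 \pmod{5}$. A short calculation gives $\phi^{p+1} = N(\phi) = -1$, so the order of $\phi$ in $\mathbb{F}_{p^2}^{\times}$ equals $2 \cdot \ord_{p,1}(1,1,1)$; hence $(1,1,1) \in \C(p)$ if and only if $\phi$ has order exactly $2(p+1)$, which fails whenever some odd prime $\ell \mid p+1$ satisfies $\phi^{(p+1)/\ell} = -1$. Fixing $\ell = 3$, the simultaneous conditions $p \equiv \pm 2 \pmod{5}$, $p \equiv 2 \pmod{3}$, and $\phi^{(p+1)/3} = -1$ describe a Frobenius condition in the Galois extension $\mathbb{Q}(\sqrt{5}, \zeta_3, \sqrt[3]{\phi})/\mathbb{Q}$. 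This class is nonempty, as witnessed by $p = 47$: here $\pi(47) = 32$ forces $\ord_{47,1}(1,1,1) = 16 < 48$, equivalently $\phi^{16} = -1$ in $\mathbb{F}_{47^2}$. By the Chebotarev density theorem the corresponding set of primes has positive natural density $\alpha > 0$, and combining with the hyperbolic exclusion gives $\delta \le 1/2 - \alpha < 1/2$.

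The main obstacle is the Chebotarev setup: one must verify that $\phi$ is not already a cube in $\mathbb{Q}(\sqrt{5}, \zeta_3)$, so that $[\mathbb{Q}(\sqrt{5}, \zeta_3, \sqrt[3]{\phi}) : \mathbb{Q}(\sqrt{5}, \zeta_3)] = 3$, and correctly identify the conjugacy class of Frobenius elements cutting out the three simultaneous conditions; both points are routine, after which positive density of the exclusion is automatic.
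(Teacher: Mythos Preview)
Your first step—using Proposition~\ref{hyp_cage} together with quadratic reciprocity and Dirichlet to obtain $\delta\le 1/2$—is exactly the paper's argument. The paper, however, stops there: it simply writes a strict ``$<$'' between $\delta$ and the density of primes $p\equiv\pm2\pmod 5$, without justifying why the inequality is strict. A proper set inclusion does not by itself force a strict inequality of natural densities, so as written the paper's proof only delivers $\delta\le 1/2$.

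Your second step is therefore a genuine addition rather than a different route to the same place. The reformulation is clean: for $p\equiv\pm2\pmod 5$ with $3\mid p+1$, the condition $\phi^{(p+1)/3}=-1$ in $\F_{p^2}$ is equivalent to $\phi$ being a cube in $\F_{p^2}^\times$, which is precisely the splitting condition for the prime of $\Q(\sqrt5,\zeta_3)$ above $p$ in the Kummer extension $K=\Q(\sqrt5,\zeta_3,\sqrt[3]{\phi})$. The witness $p=47$ shows the relevant Frobenius class is nonempty; on the other side, $p=23$ (where $\pi(23)=48$, so $(1,1,1)\in\C(23)$ and $\phi$ is \emph{not} a cube in $\F_{23^2}$) confirms that $\phi$ is not already a cube in $\Q(\sqrt5,\zeta_3)$, so $[K:\Q(\sqrt5,\zeta_3)]=3$ as you need. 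With those two routine checks in hand, Chebotarev gives a positive-density set of primes $p\equiv\pm2\pmod5$ with $(1,1,1)\notin\C(p)$, and the strict bound follows. In short: your plan is correct and actually closes a gap the paper leaves open, at the cost of invoking Chebotarev where the paper invokes nothing.
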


\begin{proof} By Proposition \ref{hyp_cage}, if $(1, 1, 1) \in \C(p)$ then the value 1 is not hpyerbolic. That is, $\Delta_1 = 5$ is a quadratic non-residue mod $p$, and so by quadratic reciprocity, $p$ is a quadratic non-residue mod $5$. Hence, $p \equiv 2, 3 \,(\mod 5)$ which gives
    \begin{align*}
        \delta &= \lim_{x\to \infty}\frac{ \# \{\text{primes } p\leq x : (1,1,1) \in \mathcal{C}(p) \}}{\# \{\text{primes } p\leq x \}} \\
        & < \lim_{x\to \infty}\frac{ \# \{\text{primes } p \equiv 2, 3 \,(\mod 5)\}}{\# \{\text{primes } p\leq x \}}
    \end{align*}
    and so $\delta<0.5$ by Dirichlet's Theorem on primes in arithmetic progressions. 
\end{proof}

Through numerical experimentation, we conjecture that $\delta$ exists and approaches roughly 40\%, as demonstrated in Figure 1 below. 
\begin{figure}[H] 
    \centering
    \includegraphics[width=0.9\linewidth]{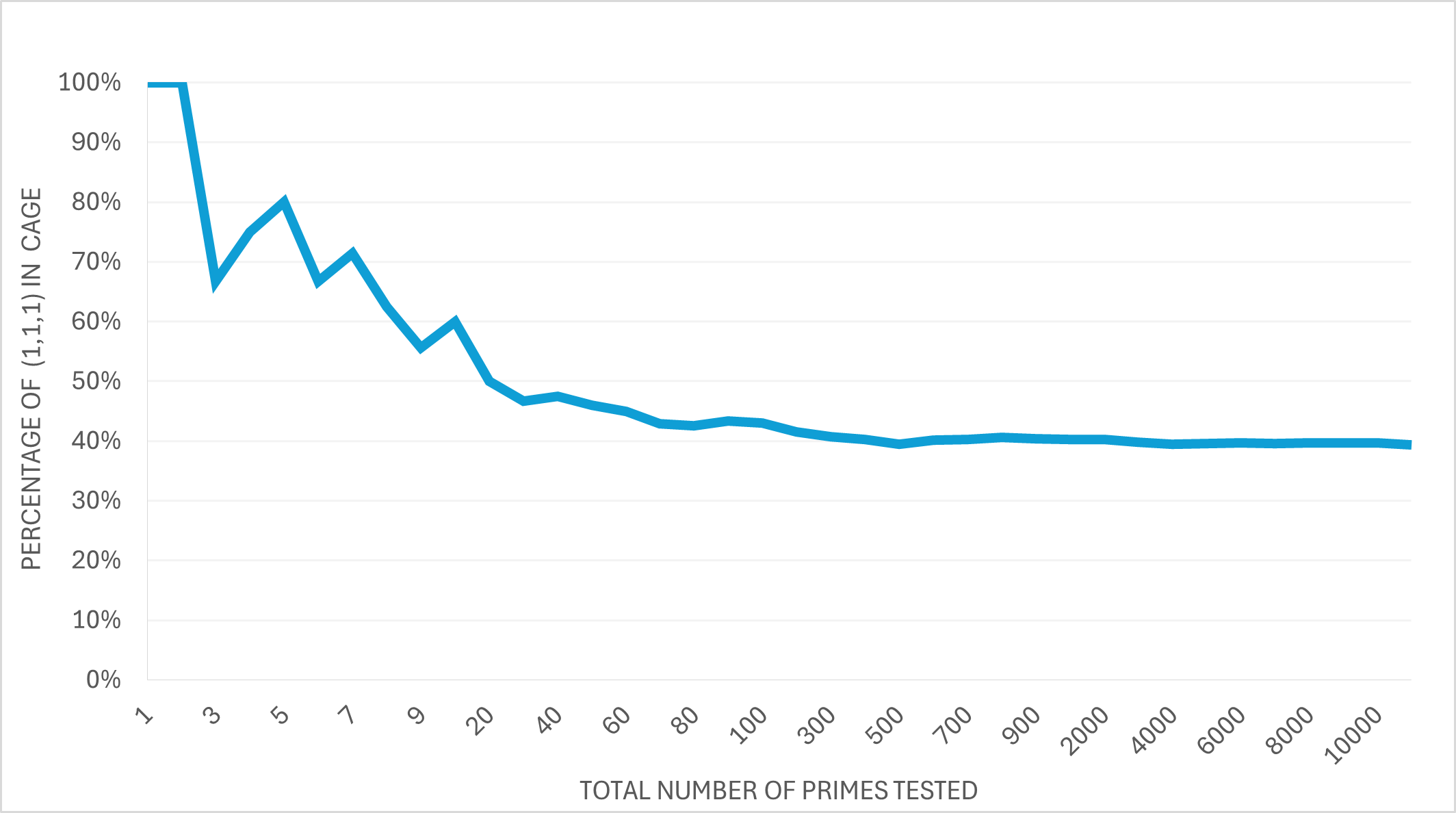}
    \caption{Percent of primes $p$ with $(1,1,1) \in \C(p)$}
    \label{fig:1}
\end{figure}
    
An interesting next direction would be to use the results of Theorem \ref{main} to obtain a lower bound on $\delta$.


\bibliography{bib}
\bibliographystyle{plain}

\end{document}